\begin{document}

\newtheorem{theorem}{Theorem}[section]
\newtheorem{corollary}[theorem]{Corollary}
\newtheorem{lemma}[theorem]{Lemma}
\newtheorem{proposition}[theorem]{Proposition}
\newtheorem{conjecture}[theorem]{Conjecture}
\newtheorem{commento}[theorem]{Comment}
\newtheorem{definition}[theorem]{Definition}
\newtheorem{problem}[theorem]{Problem}
\newtheorem{remark}[theorem]{Remark}
\newtheorem{remarks}[theorem]{Remarks}
\newtheorem{example}[theorem]{Example}

\newcommand{\Nb}{{\mathbb{N}}}
\newcommand{\Rb}{{\mathbb{R}}}
\newcommand{\Tb}{{\mathbb{T}}}
\newcommand{\Zb}{{\mathbb{Z}}}
\newcommand{\Cb}{{\mathbb{C}}}

\newcommand{\Af}{\mathfrak A}
\newcommand{\Bf}{\mathfrak B}
\newcommand{\Ef}{\mathfrak E}
\newcommand{\Gf}{\mathfrak G}
\newcommand{\Hf}{\mathfrak H}
\newcommand{\Kf}{\mathfrak K}
\newcommand{\Lf}{\mathfrak L}
\newcommand{\Mf}{\mathfrak M}
\newcommand{\Rf}{\mathfrak R}

\newcommand{\x}{\mathfrak x}

\def\A{{\mathcal A}}
\def\B{{\mathcal B}}
\def\C{{\mathcal C}}
\def\D{{\mathcal D}}
\def\E{{\mathcal E}}
\def\F{{\mathcal F}}
\def\G{{\mathcal G}}
\def\H{{\mathcal H}}
\def\J{{\mathcal J}}
\def\K{{\mathcal K}}
\def\LL{{\mathcal L}}
\def\N{{\mathcal N}}
\def\M{{\mathcal M}}
\def\N{{\mathcal N}}
\def\OO{{\mathcal O}}
\def\P{{\mathcal P}}
\def\Q{{\mathcal Q}}
\def\SS{{\mathcal S}}
\def\T{{\mathcal T}}
\def\U{{\mathcal U}}
\def\W{{\mathcal W}}

\def\ext{\operatorname{Ext}}
\def\span{\operatorname{span}}
\def\clsp{\overline{\operatorname{span}}}
\def\Ad{\operatorname{Ad}}
\def\ad{\operatorname{Ad}}
\def\tr{\operatorname{tr}}
\def\id{\operatorname{id}}
\def\en{\operatorname{End}}
\def\aut{\operatorname{Aut}}
\def\out{\operatorname{Out}}
\def\coker{\operatorname{coker}}

\def\la{\langle}
\def\ra{\rangle}
\def\rh{\rightharpoonup}

\title{On Conjugacy of MASAs in Graph $C^*$-Algebras}

\author{Tomohiro Hayashi, 
Jeong Hee Hong\footnote{J. H. Hong was supported by Basc Science Research Program through the National 
Researc Foundation of Korea (NRF) funded by te Ministry of Education, Science and Technology 
(Grant No. 2012R1A1A2039991} and 
Wojciech Szyma{\'n}ski\footnote{W. Szyma\'{n}ski was supported by the Villum Fonden Research Grant 
`Local and global structures of groups and their algebras' (2014--2018).}}

\date{{\small 22 April 2016}}

\maketitle

\renewcommand{\sectionmark}[1]{}

\vspace{7mm}
\begin{abstract}
For a large class of finite graphs $E$, we show that whenever $\alpha$ is a vertex-fixing quasi-free automorphism of the 
corresponding graph $C^*$-algebra $C^*(E)$ such that $\alpha(\D_E)\neq\D_E$, where $\D_E$ is the canonical 
MASA in $C^*(E)$, then $\alpha(\D_E)\neq w\D_E w^*$ for all unitaries $w\in C^*(E)$. That is, the two MASAs 
$\D_E$ and $\alpha(\D_E)$ of $C^*(E)$ are outer but not inner conjugate. Passing to an isomorphic $C^*$-algebra by 
changing the underlying graph makes this result applicable to certain non  quasi-free automorphisms as well. 
\end{abstract}

\vspace{3mm}
\noindent {\bf MSC 2010}: 46L05, 46L40

\vspace{3mm}
\noindent {\bf Keywords}: graph $C^*$-algebra, MASA, automorphism


\section{Introduction}

Maximal abelian subalgebras (MASAs) have played very important role in the study of von Neumann algebras 
from the very beginning, and their theory is quite well developed by now, e.g. see \cite{SinSm}, \cite{PV} and 
references therein. Theory of MASAs of 
$C^*$-algebras is somewhat less advanced, several nice attempts in this direction notwithstanding. Our particular 
interest lies in classification of MASAs in purely infinite simple $C^*$-algebras, and especially in Kirchberg algebras. 
In addition to its intrinsic interest, better understanding of MASAs in Kirchberg algebras could have significant consequences 
for the still very much open classification of automorphisms and group actions on these algebras. In this context, we would like to 
single out the recent work of Barlak and Li, \cite{BaLi}, where a connection between the outstanding UCT problem for 
crossed products and existence of invariant Cartan subalgebras is investigated. 

It was unknown until very recently if two outer conjugate MASAs (that is, two MASAs $\Af$ and $\Bf$ for which there 
exists an automorphism $\sigma$ of the ambient algebra such that $\sigma(\Af)=\Bf$) of a purely infinite simple 
$C^*$-algebra must necessarily 
be inner conjugate as well (that is, if there exists a unitary $w$ such that $w\Af w^*=\Bf$). This question was answered to 
the negative in \cite[Theorem 3.7]{CHS2015}, with explicit counter-examples in the Cuntz algebras $\OO_n$. 

In the present paper, we extend the main result of \cite{CHS2015} to the case of purely infinite simple graph 
$C^*$-algebras $C^*(E)$ corresponding to finite graphs $E$. Namely, we show in Theorem \ref{mainqfree} below that 
every quasi-free automorphism of $C^*(E)$ either leaves the canonical MASA $\D_E$ globally invariant or moves it to 
another MASA of $C^*(E)$ which is not inner conjugate to $\D_E$. Although our Theorem \ref{mainqfree} is stated 
for quasi-free automorphisms only, it is in fact applicable to some other automorphisms as well. This is due to the fact 
that passing from one graph $E$ to another $F$ with the isomorphic algebra $C^*(F)\cong C^*(E)$ will often not preserve 
the property of an automorphism to be quasi-free. We discuss this phenomenon in Section 4. 
To make the present paper self-contained, we recall the necessary background on graph 
$C^*$-algebras and their endomorphisms in the preliminaries. 


\section{Preliminaries}

\subsection{Finite directed graphs and their $C^*$-algebras}

Let $E=(E^0,E^1,r,s)$ be a directed graph, where $E^0$ and $E^1$ are {\em finite} sets of vertices 
and edges, respectively, and $r,s:E^1\to E^0$ are range and source maps, respectively. 
A {\em path} $\mu$ of length $|\mu|=k\geq 1$ is a sequence 
$\mu=(\mu_1,\ldots,\mu_k)$ of $k$ edges $\mu_j$ such that 
$r(\mu_j)=s(\mu_{j+1})$ for $j=1,\ldots, k-1$. We view the vertices as paths of length $0$.
The set of all paths of length $k$ is denoted $E^k$, and $E^*$ denotes the collection of 
all finite paths (including paths of length zero). The range and source maps naturally 
extend from edges $E^1$ to paths $E^k$. A {\em sink} is a vertex $v$ which emits 
no edges, i.e. $s^{-1}(v)=\emptyset$. By a {\em cycle} we mean a path $\mu$ of 
length $|\mu|\geq 1$ such that $s(\mu)=r(\mu)$. 
A cycle $\mu=(\mu_1,\ldots,\mu_k)$ has an exit if there is a $j$ such that $s(\mu_j)$ 
emits at least two distinct edges. Graph $E$ is {\em transitive} if for any two vertices $v,w$ there exists 
a path $\mu\in E^*$ from $v$ to $w$ of non-zero length. Thus a transitive graph does not contain any 
sinks or sources. Given a graph $E$, we will denote by $A=[A(v,w)]_{v,w\in E^0}$ its {\em adjacency matrix}.  
That is, $A$ is a matrix with rows and columns indexed by the vertices of $E$, such that $A(v,w)$ is 
the number of edges with source $v$ and range $w$. 

The $C^*$-algebra $C^*(E)$ corresponding to a graph $E$ 
is by definition, \cite{KPRR} and \cite{KPR},  the universal $C^*$-algebra generated by mutually 
orthogonal projections $P_v$, $v\in E^0$, and partial isometries $S_e$, $e\in E^1$, 
subject to the following two relations: 
\begin{description}
\item{(GA1)} $S_e^*S_e=P_{r(e)}$,  
\item{(GA2)} $P_v=\sum_{s(e)=v}S_e S_e^*$ if $v\in E^0$ emits at least one edge. 
\end{description}
For a path $\mu=(\mu_1,\ldots,\mu_k)$ we denote by $S_\mu=
S_{\mu_1}\cdots S_{\mu_k}$ the corresponding partial isometry in $C^*(E)$.  
 We agree to write $S_v=P_v$ for a  $v\in E^0$.
Each $S_\mu$ is non-zero with the domain projection $P_{r(\mu)}$. 
Then $C^*(E)$ is the closed span of $\{S_\mu S_\nu^*:\mu,\nu\in E^*\}$.  
Note that $S_\mu S_\nu^*$ is non-zero if and only if 
$r(\mu)=r(\nu)$. In that case, $S_\mu S_\nu^*$ is a partial isometry with domain and range projections 
equal to $S_\nu S_\nu^*$ and $S_\mu S_\mu^*$, respectively. 

The range projections $P_\mu=S_\mu S_\mu^*$ of all 
partial isometries $S_\mu$ mutually commute, and the abelian $C^*$-subalgebra of $C^*(E)$ 
generated by all of them is called the diagonal subalgebra and denoted $\D_E$. 
We set $\D^0_E = {\rm span}\{P_v  :  v\in E^0  \}$ and, more generally, 
$\D_E^k= {\rm span}\{P_\mu  :  \mu\in E^k  \}$ for $k\geq 0$. $C^*$-algebra $\D_E$ coincides with the 
norm closure of $\bigcup_{k=0}^\infty\D_E^k$. 
If $E$ does not contain sinks and all cycles have exits then 
$\D_E$ is a MASA (maximal abelian subalgebra) in $C^*(E)$ by \cite[Theorem 5.2]{HPP}. 
Throughout this paper, we make the following

\vspace{2mm}\noindent
{\bf standing assumption:}  all graphs we consider are transitive  
and all cycles in these graphs admit exits. 

\vspace{2mm}
There exists a strongly continuous action $\gamma$ of the circle group $U(1)$ on $C^*(E)$, 
called the {\em gauge action}, such that $\gamma_z(S_e)=zS_e$ and $\gamma_z(P_v)=P_v$ 
for all $e\in E^1$, $v\in E^0$ and $z\in U(1)\subseteq\Cb$. 
The fixed-point algebra $C^*(E)^\gamma$ for the gauge action is an AF-algebra, denoted 
$\F_E$ and called the core AF-subalgebra of $C^*(E)$. $\F_E$ is the closed span of 
$\{S_\mu S_\nu^*:\mu,\nu\in E^*,\;|\mu|=|\nu|\}$. For $k\in\Nb=\{0,1,2,\ldots\}$ 
we denote by $\F_E^k$ the linear span of $\{S_\mu S_\nu^*:\mu,\nu\in E^*,\;|\mu|=|\nu|= k\}$. 
$C^*$-algebra $\F_E$ coincides with the norm closure of $\bigcup_{k=0}^\infty\F_E^k$. 

We consider the usual {\em shift} on $C^*(E)$, \cite{CK}, given by
\begin{equation}\label{shift}
\varphi(x)=\sum_{e\in E^1} S_e x S_e^*, \;\;\; x\in C^*(E).  
\end{equation} 
In general, for finite graphs without sinks and sources, the shift is a unital, completely positive map. However, it 
is an injective $*$-homomorphism when restricted to the relative commutant $(\D_E^0)'\cap C^*(E)$. 

We observe that for each $v\in E^0$ projection $\varphi^k(P_v)$ is minimal in the center of $\F_E^k$. 
The $C^*$-algebra $\F_E^k\varphi^k(P_v)$ is the linear span of partial isometries $S_\mu S_\nu^*$ with 
$|\mu|=|\nu|=k$ and $r(\mu)=r(\nu)=v$. It is isomorphic to the full matrix algebra of size $\sum_{w\in E^0}
A^k(w,v)$. The multiplicity of $\F_E^k\varphi^k(P_v)$ in $\F_E^{k+1}\varphi^{k+1}(P_w)$ is $A(v,w)$, 
so the Bratteli diagram for $\F_E$ is induced from the graph $E$, see \cite{CK}, \cite{KPRR} or \cite{BPRSz}.                                                                  
We also note that the relative commutant 
of $\F_E^k$ in $\F_E^{k+1}$ is isomorphic to $\bigoplus_{v,w\in E^0}M_{A(v,w)}(\Cb)$. 

For an integer $m\in\Zb$, we denote by $C^*(E)^{(m)}$ the spectral subspace of the gauge 
action corresponding to $m$. That is,  
$$ C^*(E)^{(m)}:=\{x\in C^*(E) \mid \gamma_z(x)=z^m x,\,\forall z\in U(1)\}. $$ 
In particular, $C^*(E)^{(0)}=C^*(E)^\gamma$. 
There exist faithful conditional expectations $\Phi_{\F}:C^*(E)\to\F_E$ and $\Phi_{\D}:C^*(E)\to\D_E$ 
such that $\Phi_{\F}(S_\mu S_\nu^*)=0$ for $|\mu|\neq|\nu|$ and $\Phi_{\D}(S_\mu S_\nu^*)=0$ 
for $\mu\neq\nu$. Combining $\Phi_\F$ with a faithful conditional expectation from $\F_E$ onto $\F_E^k$, 
we obtain a faithful conditional expectation $\Phi_{\F}^k:C^*(E)\to\F_E^k$. Furthermore, for each 
$m\in\Nb$ there is a unital, contractive and completely bounded map $\Phi^m:C^*(E)\to C^*(E)^{(m)}$ 
given by 
\begin{equation}
\Phi^m(x) = \int_{z\in U(1)} z^{-m}\gamma_z(x)dx. 
\end{equation}
In particular, $\Phi^0=\Phi_\F$.
We have $\Phi^m(x)=x$ for all $x\in C^*(E)^{(m)}$.  If 
$x\in C^*(E)$ and $\Phi^m(x)=0$ for all $m\in\Zb$ then $x=0$.


\subsection{Endomorphisms determined by unitaries}

Cuntz's classical approach to the study of endomorphisms of $\OO_n$, \cite{Cun}, has recently been 
extended to graph $C^*$-algebras in \cite{CHS2015} and \cite{AJSz}. In this subsection, we recall a few 
most essential definitions and facts about such endomorphisms. 

We denote by $\U_E$ the collection of all those unitaries in $C^*(E)$ which 
commute with all vertex projections $P_v$, $v\in E^0$. That is 
\begin{equation}\label{ue}
\U_E:=\U((\D_E^0)'\cap C^*(E)). 
\end{equation}
If $u\in\U_E$ then $uS_e$, $e\in E^1$, are partial isometries in $C^*(E)$ which together with 
projections $P_v$, $v\in E^0$, satisfy (GA1) and (GA2). Thus, by the universality of $C^*(E)$, 
there exists a unital $*$-homomorphism $\lambda_u:C^*(E)\to C^*(E)$ such 
that\footnote{The reader should be aware that in some papers (e.g. in \cite{Cun}) a 
different convention is used, namely $\lambda_u(S_e)=u^* S_e$.}
\begin{equation}\label{lambda}
\lambda_u(S_e)=u S_e \;\;\; {\rm and}\;\;\;  \lambda_u(P_v)=P_v, \;\;\;  {\rm for}\;\; e\in E^1,\; v\in E^0. 
\end{equation}
The mapping $u\mapsto\lambda_u$ establishes 
a bijective correspondence between $\U_E$ and the semigroup of those unital endomomorphisms 
of $C^*(E)$ which fix all  $P_v$, $v\in E^0$.
As observed in \cite[Proposition 1.1]{CHS2012}, if $u\in\U_E\cap\F_E$ then $\lambda_u$ 
is automatically injective. We say $\lambda_u$ is {\em invertible} if $\lambda_u$ is an automorphism of $C^*(E)$. 
We denote 
\begin{equation}
 \Bf := (\D_E^0)'\cap \F_E^1. 
\end{equation}
That is, $\Bf$ is the linear span of elements $S_e S_f^*$, $e,f\in E^1$, with $s(e)=s(f)$. We note that 
$\Bf$ is contained in the multiplicative domain of $\varphi$ and we have $\D_E^1 \subseteq \Bf 
\subseteq \F_E^1$. If $u\in\U(\Bf)$ then $\lambda_u$ is automatically invertible with inverse 
$\lambda_{u^*}$ and the map 
\begin{equation}\label{quasifree}
\U(\Bf) \ni u\mapsto \lambda_u \in\aut(C^*(E))
\end{equation} 
is a group homomorphism with range inside 
the subgroup of {\em quasi-free automorphisms} of $C^*(E)$, see \cite{Z}. Note that this group is almost never trivial 
and it is non-commutative if graph $E$ contains two edges $e,f\in E^1$ such that $s(e)=s(f)$ and $r(e)=r(f)$. 

The shift $\varphi$ globally preserves $\U_E$, $\F_E$ and $\D_E$. For $k\geq 1$ we denote 
\begin{equation}\label{uk}
u_k := u\varphi(u)\cdots\varphi^{k-1}(u).   
\end{equation}
For each $u\in\U_E$ and all $e\in E^1$ we have $S_e u = \varphi(u) S_e$, and thus 
\begin{equation}\label{uaction}
\lambda_u(S_\mu S_\nu^*)=u_{|\mu|}S_\mu S_\nu^*u_{|\nu|}^* 
\end{equation}
for any two paths $\mu,\nu\in E^*$. 


\section{Quasi-free automorphisms}

In this section, we extend the main result of \cite{CHS2015}, applicable to the Cuntz algebras, to a much wider 
class of graph $C^*$-algebras. 

For the proof of Lemma \ref{keylemma}, below, we recall from Lemma 3.2 and Remark 3.3 in \cite{CHS2015} 
that if $x\in C^*(E)$, $x\geq 0$, and $x\D_E=\D_E x$ then $x\in\D_E$. Also, for $v,w\in E^0$ we set 
$$ _v Q_w := \sum_{s(e)=v,\,r(e)=w} S_e S_e^*. $$
Then $_v Q_w$ is a minimal central projection in $\Bf$, and $\Bf_v Q_w$ is isomorphic to $M_{A(v,w)}(\Cb)$. 
Furthermore, $P_v=\sum_{w\in E^0} {_v Q_w}$ is a central projection in $\Bf$. 

\begin{lemma}\label{keylemma}
Let $u\in\U(\Bf)$ be such that $u\D_E^1u^*\neq\D_E^1$, and let $x\in\F_E$ be 
arbitrary. If $x\lambda_u(\D_E) = \D_Ex$ then $x=0$.  
\end{lemma}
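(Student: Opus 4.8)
The plan is to work with the filtration $\{\F_E^k\}$ and to show that if $x \in \F_E$ satisfies $x\lambda_u(\D_E) = \D_E x$, then $x$ commutes with a MASA and hence lies in a diagonal, and then a mismatch between the diagonals $\lambda_u(\D_E)$ and $\D_E$ at level $1$ forces $x = 0$. First I would reduce to the case $x \geq 0$: writing the intertwining relation for $x$, one gets $x^*x\,\D_E = \D_E\,x^*x$ (using that $\lambda_u(\D_E)$ is also abelian and applying the relation twice, once for $x$ and once for $x^*$), so by the quoted fact from \cite{CHS2015} (if $x \geq 0$ and $x\D_E = \D_E x$ then $x \in \D_E$) we get $x^*x \in \D_E$; similarly $xx^* \in \lambda_u(\D_E)$. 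So it suffices to prove the statement for $x$ a positive element of $\D_E$, or more precisely to analyze the polar/positive part.

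Next I would localize. Since $x \in \F_E = \overline{\bigcup_k \F_E^k}$, approximate $x$ by $x_k \in \F_E^k$; better, use the faithful conditional expectations $\Phi_\F^k$ and argue that the intertwining relation descends to each level. The key computation is at level one: the hypothesis $u\D_E^1 u^* \neq \D_E^1$ means that $\lambda_u(\D_E)$ and $\D_E$ already differ inside $\F_E^1$. Using (\ref{uaction}), $\lambda_u(P_e) = u_1 P_e u_1^* = u P_e u^*$ for $e \in E^1$, so $\lambda_u(\D_E^1) = u\D_E^1 u^*$. Now if $x$ intertwines $\D_E$ and $\lambda_u(\D_E)$, then in particular $x$ intertwines the level-one pieces after compressing by the central projections $P_v$ and $_vQ_w$: on each block $\Bf\,{_vQ_w} \cong M_{A(v,w)}(\Cb)$, the element $u$ conjugates the standard diagonal to another MASA of the matrix algebra, and $x$ would have to intertwine these two MASAs of $M_{A(v,w)}(\Cb)$ — but a positive element of a full matrix algebra intertwining two \emph{distinct} MASAs can only be $0$ (two distinct MASAs of $M_n$ generate the whole algebra, so an element commuting with both is scalar, and a scalar intertwining distinct diagonals up to the off-diagonal structure is $0$; more carefully, $x a = b x$ for all diagonal $a$ with $b = uau^*$ forces $x = 0$ unless $u$ normalizes the diagonal). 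The main obstacle is making this block-by-block matrix argument interact correctly with the AF-inductive-limit structure: an element of $\F_E$ need not live in any single $\F_E^k$, and the relation $x\lambda_u(\D_E) = \D_Ex$ must be exploited at \emph{all} levels simultaneously.

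To handle that, I would use the shift $\varphi$: since $\D_E$ is generated by $\varphi^k(\D_E^1)$-type elements and $\varphi$ is injective on $(\D_E^0)' \cap C^*(E)$, the failure of normalization at level $1$ propagates. Concretely, for $x \in \F_E^n$ (or its compression), the relation $x\,\varphi^j(\lambda_u(P_e)) = \varphi^j(P_{e'})\,x$ for appropriate paths, combined with the fact that $u \in \U(\Bf)$ is "local" (lives in $\F_E^1$), should let one run the matrix-algebra argument on the relative commutant of $\F_E^n$ in a higher level. I expect the cleanest route is: fix $k$ large, apply $\Phi_\F^k$ to get $x_k := \Phi_\F^k(x) \in \F_E^k$ still satisfying an approximate (in fact exact, by $\D_E$-bimodule considerations) intertwining relation, decompose $\F_E^k = \bigoplus_v \F_E^k \varphi^k(P_v)$ into matrix blocks, and on each block note that $\lambda_u$ acts by conjugation by $u_k = u\varphi(u)\cdots\varphi^{k-1}(u)$, which sends the block-diagonal MASA to a MASA that is \emph{different} precisely because $u \not\in \U((\D_E^1)' )$ — so the block component of $x_k$ vanishes. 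Letting $k \to \infty$ gives $x = 0$. The hard part, and where I would spend the most care, is verifying that the intertwining relation survives the conditional expectation $\Phi_\F^k$ exactly (not just approximately) — this should follow because $\Phi_\F^k$ is a $\D_E^k$-bimodule map and both $\D_E$ and $\lambda_u(\D_E)$ are compatible with the tower, but the bookkeeping with $\lambda_u(\D_E)$ (which is \emph{not} one of the standard subalgebras of the tower) is the delicate point.
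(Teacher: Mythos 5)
There is a genuine gap: the finite-dimensional claim your argument hinges on is false. In a full matrix algebra $M_n(\Cb)$ with standard diagonal $D$ and a unitary $u$ such that $uDu^*\neq D$, there \emph{do} exist nonzero elements $x$ with $x\,uDu^* = Dx$: for instance $x = e_j(ue_i)^*$ (for standard basis vectors $e_i,e_j$) satisfies $x(udu^*) = d_{ii}\,x$ and $Dx=\Cb x$, so it intertwines the two MASAs exactly; and if the two MASAs share a minimal projection $q$, the positive element $x=q$ works as well. Consequently no argument confined to a single block $\Bf\,{}_vQ_w\cong M_{A(v,w)}(\Cb)$, or to a single level $\F_E^k$, can prove the lemma --- the obstruction is simply not visible at any finite stage, and an $x$ intertwining the level-one truncations need not be forced to vanish. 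Relatedly, your ``reduction to positive $x$'' is not a reduction: from $x\lambda_u(\D_E)=\D_E x$ one gets $xx^*\in\D_E$ and $x^*x\in\lambda_u(\D_E)$ (note you state these two the other way round), but the positive element $xx^*$ no longer remembers that the two MASAs are distinct, so a statement about positive elements of $\D_E$ would tell you nothing about $x$ itself. Finally, the step you flag as delicate --- pushing the exact intertwining relation through $\Phi_\F^k$ --- is indeed problematic and is not something the paper attempts.

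What the paper does instead is exploit the relation at \emph{all} levels simultaneously. It chooses a projection $p\in\D_E^1P_v$ whose conjugate $upu^*$ stays at distance $\delta>0$ from all of $\D_E$ (not just from $\D_E^1P_v$; this uses $\Phi_\F^1$), observes that $\lambda_u(\varphi^k(p))=\varphi^k(upu^*)$ and that $\{\varphi^k(upu^*)\}_k$ is an asymptotically central sequence in $\F_E$, so the relations $x\,\varphi^k(upu^*)=q_kx$ yield $\lim_k(\varphi^k(upu^*)-q_k)xx^*=0$. Since $xx^*\in\D_E$ (the only place the commutation fact from \cite{CHS2015} enters), one finds a projection $d\in\D_E^m$ carrying most of $xx^*$ on which $\varphi^k(upu^*)-q_k$ is uniformly small, and then --- using transitivity of $E$ --- a path $\mu\in E^k$ with $r(\mu)=v$ and $S_\mu S_\mu^*\leq d$; compressing by $S_\mu$ returns the estimate to $\|upu^*P_v - S_\mu^*q_kS_\mu\|\geq\delta$, a contradiction. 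The interplay between asymptotic centrality of $\varphi^k(upu^*)$, the membership $xx^*\in\D_E$, and graph transitivity is the essential mechanism missing from your outline.
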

\begin{proof}
Suppose $x\in\F_E$ is such that $||x||=1$ and $x\lambda_u(\D_E) = \D_Ex$.  From this we will derive a 
contradiction. 

Since $u\D_E^1u^*\neq\D_E^1$, there exists a vertex $v\in E^0$ such that $u\D_E^1 u^* P_v \neq \D_E^1 P_v$. 
Thus, since $u\D_E^1P_vu^*=   u\D_E^1 u^* P_v$, we can take a projection $p\in\D_E^1P_v$ satisfying
$$ \delta := \inf\{ ||upu^* -q|| \mid q\in\D_E^1P_v\} >0. $$
Since $\Phi_{\F}^1(q')\in\D_E^1$, for all $q'\in\D_E$ we get 
$$ ||upu^*-q'|| \geq ||\Phi_\F^1(upu^*-q')|| = || upu^* - \Phi_\F^1(q') || \geq \delta. $$
By assumption, for each $k\in\Nb$ there is a $q_k\in\D_E$ such that 
\begin{equation}\label{leq1}
x\lambda_u(\varphi^k(p))=q_kx. 
\end{equation} 
Since $u_k\in\F_E^k$ and $\varphi^k(upu^*)\in\varphi^k(\Bf)=(\F_E^k)'\cap\F_E^{k+1}$, we have
\begin{equation}\label{leq2}
\lambda_u(\varphi^k(p)) = u_k\varphi^k(\lambda_u(p))u_k^* = u_k\varphi^k(upu^*))u_k^* = \varphi^k(upu^*). 
\end{equation}
Identities (\ref{leq1}) and (\ref{leq2}) combined yield  
$$ 0 = x\lambda_u(\varphi^k(p)) - q_kx = x\varphi^k(upu^*) - q_kx. $$
Since $upu^*\in\Bf$,  the sequence $\{\varphi^k(upu^*)\}_{k=1}^\infty$ is central in $\F_E$. 
Therefore we have 
$$ \lim_{k\to\infty}(\varphi^k(upu^*)-q_k)xx^*=0. $$
It follows from the assumption on $x$ that $xx^*\D_E = \D_E xx^*$, and thus we may conclude that  $xx^*\in\D_E$. 

Now, take an arbitrary $\epsilon >0$. For a sufficiently large $m\in\Nb$, we have 
$$ \limsup_{k\to\infty} ||(\varphi^k(upu^*)-q_k)\Phi_\F^m(xx^*)|| \leq \epsilon \;\;\; \text{and} 
\;\;\; ||\Phi_\F^m(xx^*)|| \geq 1-\epsilon. $$
Thus we can find a projection $d\in\D_E^m$ such that 
$$ \limsup_{k\to\infty} ||(\varphi^k(upu^*)-q_k)d|| \leq \frac{\epsilon}{1-\epsilon}. $$
Since graph $E$ is transitive, for a sufficiently large $k\in\Nb$ we can find a path $\mu\in E^k$ such that 
$r(\mu)=v$ and $S_\mu S_\mu^* \leq d$. But now we see that 
$$ 3\epsilon \geq ||(\varphi^k(upu^*)-q_k)d|| \geq ||(\varphi^k(upu^*)-q_k)S_\mu S_\mu^*|| = 
|| upu^*P_v - S_\mu^*q_kS_\mu|| \geq \delta. $$
Since $\epsilon$ can be arbitrarily small, this is the desired contradiction. 
\end{proof}

Now, we are ready to prove our main result. 

\begin{theorem}\label{mainqfree}
Let $u\in\U(\Bf)$ be such that $u\D_E^1u^*\neq\D_E^1$. Then there is no non-zero element 
$x\in C^*(E)$ satisfying $x\lambda_u(\D_E) = \D_Ex$. In particular, there is no unitary $w\in C^*(E)$ such 
that $w\D_Ew^*=\lambda_u(\D_E)$. 
\end{theorem}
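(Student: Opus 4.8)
The plan is to reduce the statement from a general element $x \in C^*(E)$ to the AF-core situation already handled in Lemma \ref{keylemma}, by exploiting the gauge action and the structure of the conditional expectations $\Phi_\F^m$ and $\Phi^m$. First I would take a putative non-zero $x \in C^*(E)$ with $x\lambda_u(\D_E) = \D_E x$ and aim to show that its ``diagonal'' piece $\Phi_\F(x) \in \F_E$ already satisfies the same intertwining relation, so that Lemma \ref{keylemma} forces $\Phi_\F(x) = 0$; then one bootstraps to all gauge-spectral components and concludes $x = 0$.

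The key computation is to understand how $\lambda_u$ and the expectations interact. Since $u \in \U(\Bf) \subseteq \F_E^1$, the automorphism $\lambda_u$ commutes with the gauge action: $\gamma_z \circ \lambda_u = \lambda_u \circ \gamma_z$ because $\lambda_u(S_e) = uS_e$ and $\gamma_z(uS_e) = u\gamma_z(S_e) = zuS_e$. Consequently $\lambda_u$ commutes with $\Phi^m$ for every $m$, and in particular with $\Phi_\F = \Phi^0$; moreover $\lambda_u(\D_E) \subseteq \F_E$, so $\Phi_\F$ is the identity on $\lambda_u(\D_E)$. Now fix $d \in \D_E$ and write the relation $x \lambda_u(d) = d' x$ for the appropriate $d' \in \D_E$. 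Applying $\Phi_\F$ and using that $\Phi_\F$ is an $\F_E$-bimodule map together with $\lambda_u(d), d' \in \F_E$, I get $\Phi_\F(x)\lambda_u(d) = d'\Phi_\F(x)$. Since $d \in \D_E$ was arbitrary, this says precisely $\Phi_\F(x)\lambda_u(\D_E) \subseteq \D_E \Phi_\F(x)$, and the reverse inclusion follows symmetrically from the relation $\D_E x = x\lambda_u(\D_E)$. Thus $\Phi_\F(x) \in \F_E$ satisfies the hypothesis of Lemma \ref{keylemma}, whence $\Phi_\F(x) = 0$.

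To finish, I would run the same argument on each spectral subspace. For arbitrary $m \in \Zb$, apply $\Phi^m$ to $x\lambda_u(d) = d'x$; since $\Phi^m$ is a bimodule map over the fixed-point algebra $\F_E \supseteq \D_E, \lambda_u(\D_E)$, one obtains $\Phi^m(x)\lambda_u(d) = d'\Phi^m(x)$. Writing $y = \Phi^m(x) \in C^*(E)^{(m)}$, consider $y^*y \in \F_E$: from $y\lambda_u(\D_E) = \D_E y$ one gets $y^*y \lambda_u(\D_E) = \lambda_u(\D_E) y^* y$ (using that $\lambda_u(\D_E)$ is abelian, so it commutes with itself, after conjugating the relation), hence $y^*y$ commutes with $\lambda_u(\D_E)$; applying Lemma \ref{keylemma} in the form already proved — or rather the remark that a positive element of $\F_E$ commuting with $\D_E$ lies in $\D_E$, combined with the displayed argument — forces $y^*y = 0$, i.e. $\Phi^m(x) = 0$ for all $m$. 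Since $x \in C^*(E)$ with $\Phi^m(x) = 0$ for all $m \in \Zb$ implies $x = 0$, we conclude $x = 0$. The final sentence about unitaries is immediate: if $w$ were a unitary with $w\D_E w^* = \lambda_u(\D_E)$, then $x := w$ satisfies $x\lambda_u(\D_E) = w\lambda_u(\D_E) = w(w^*\D_E w) = \D_E w = \D_E x$, contradicting $x \neq 0$.

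\textbf{Main obstacle.} The delicate point is the transition from $\F_E$ to general spectral subspaces. For $m \neq 0$, $\Phi^m(x)$ is not in $\F_E$, so one cannot apply Lemma \ref{keylemma} directly; one must instead pass to $\Phi^m(x)^*\Phi^m(x)$ or $\Phi^m(x)\Phi^m(x)^*$, which does lie in $\F_E$, and carefully check that the intertwining relation is inherited by this positive element in the right form — specifically, that $\Phi^m(x)\lambda_u(\D_E) = \D_E\Phi^m(x)$ really does yield $\Phi^m(x)^*\Phi^m(x)$ commuting with $\lambda_u(\D_E)$, which then by Lemma \ref{keylemma} (applied with roles consistent, or via the ``positive element commuting with $\D_E$'' remark conjugated by $\lambda_u$) must vanish. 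Verifying that this positive-element reduction is legitimate, and that no information is lost in passing $x \mapsto x^*x$, is where the real care is needed; everything else is a routine application of the bimodule property of the canonical expectations.
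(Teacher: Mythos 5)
Your treatment of the zero-degree component is sound and agrees with the paper's idea: since $u\in\F_E^1$, the automorphism $\lambda_u$ commutes with the gauge action, $\lambda_u(\D_E)\subseteq\F_E$, and the $\F_E$-bimodule property of $\Phi_\F$ converts $x\lambda_u(\D_E)=\D_E x$ into the same relation for $\Phi_\F(x)\in\F_E$, so Lemma \ref{keylemma} gives $\Phi_\F(x)=0$. The gap is exactly at the step you flag as delicate: for $m\neq 0$ you pass from $y=\Phi^m(x)$ to the positive element $y^*y$ (or $yy^*$), and this destroys the information that makes the theorem true. From $y\lambda_u(\D_E)=\D_E y$ one gets only $y^*y\,\lambda_u(\D_E)=\lambda_u(\D_E)\,y^*y$ and $yy^*\,\D_E=\D_E\,yy^*$; by the positivity remark these say $y^*y\in\lambda_u(\D_E)$ and $yy^*\in\D_E$, not that they vanish. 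Lemma \ref{keylemma} does not apply to either element, because its hypothesis is the asymmetric two-sided relation $x\lambda_u(\D_E)=\D_E x$ with two \emph{different} algebras on the two sides, and that asymmetry is precisely what is lost in passing to $x^*x$. The failure is not merely formal: the hypothesis $u\D_E^1u^*\neq\D_E^1$ no longer enters your $m\neq 0$ argument at all, and there exist plenty of non-zero elements $y\in C^*(E)^{(m)}$, $m\neq 0$, with $y\D_E=\D_E y$ (for instance $y=S_\mu$), so no argument of the shape ``$y$ normalizes, hence $y^*y$ commutes, hence $y=0$'' can succeed.

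The paper handles the non-zero degrees by a different device: it multiplies $x$ on the left by $S_\mu$ with $\mu\in E^m$, checks that $S_\mu x\,\lambda_u(\D_E)=\D_E\,S_\mu x$ still holds (because $S_\mu$ normalizes $\D_E$ and $S_\mu^*\D_E S_\mu=P_{r(\mu)}\D_E$), and then applies Lemma \ref{keylemma} to $\Phi_\F(S_\mu x)=S_\mu\Phi^{-m}(x)\in\F_E$; letting $\mu$ range over paths of length $m$ ending at each vertex yields $\Phi^{-m}(x)=0$ for all $m\in\Nb$. The positive-degree components are then obtained by taking adjoints and applying $\lambda_{u^*}$ (noting that $u^*\in\U(\Bf)$ also satisfies $u^*\D_E^1u\neq\D_E^1$ and that $\lambda_{u^*}$ commutes with the $\Phi^m$), which reduces them to the negative-degree case for $\lambda_{u^*}(x^*)$. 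You would need to replace your positive-element reduction with an argument of this kind. Your closing derivation of the statement about unitaries from the main statement is correct.
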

\begin{proof}
Let $x\in C^*(E)$ be as in the statement of the theorem. To verify that $x=0$, it suffices to show 
that $\Phi^m(x)=0$ for all $m\in\Zb$. 

We have $S_\mu^*\D_E S_\mu = P_{r(\mu)}\D_E $ for each $\mu\in E^*$. Thus $P_{r(\mu)}x
\lambda_u(\D_E) = P_{r(\mu)}\D_E x = S_\mu^*\D_E S_\mu x$, and hence $S_\mu x\lambda_u(\D_E) = 
\D_E S_\mu x$. Therefore by Lemma \ref{keylemma}, we get 
$$ \Phi_\F(S_\mu x) = 0 \;\;\; \text{for all} \;\;\; \mu\in E^*. $$
Let $m\in\Nb$. For a vertex $v\in E^0$ take a path $\mu\in E^m$ with $r(\mu)=v$. Then 
$0 = \Phi_\F(S_\mu x) = S_\mu\Phi^{-m}(x)$. Thus $P_v\Phi^{-m}(x)=0$, and summing over all $v\in E^0$ 
we see that $\Phi^{-m}(x)=0$ for all $m\in\Nb$. 

Now, taking first adjoints of both sides of the identity $x\lambda_u(\D_E) = \D_Ex$ and then applying 
$\lambda_{u^*}=\lambda_u^{-1}$, we get $\lambda_{u^*}(x^*)\lambda_{u^*}(\D_E) = 
\D_E\lambda_{u^*}(x^*)$. Since $u^*\in\U(\Bf)$ and $u^*\D_E^1u\neq\D_E^1$, 
we get $\Phi^{-m}(\lambda_{u^*}(x^*))=0$ for all $m\in\Nb$ applying the preceding argument. But 
$\Phi^{-m}(\lambda_{u^*}(x^*)) = \lambda_{u^*}(\Phi^{-m}(x^*)) = \lambda_{u^*}(\Phi^{m}(x))$. 
Thus $\Phi^m(x)=0$ for all $m\in\Nb$, and the proof is complete. 
\end{proof}

\begin{corollary}
Let $u,v\in\U(\Bf)$ be such that $u\D_E^1u^*\neq v\D_E^1 v^*$. Then there is no unitary $w\in C^*(E)$ such 
that $w\lambda_u(\D_E)w^*=\lambda_v(\D_E)$. 
\end{corollary}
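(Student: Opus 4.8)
The plan is to reduce the Corollary to Theorem \ref{mainqfree} by a conjugation argument. Suppose, for contradiction, that there is a unitary $w\in C^*(E)$ with $w\lambda_u(\D_E)w^*=\lambda_v(\D_E)$. Applying the automorphism $\lambda_{v^*}=\lambda_v^{-1}$ to both sides, and setting $w':=\lambda_{v^*}(w)$, which is again a unitary in $C^*(E)$, we obtain $w'\lambda_{v^*}(\lambda_u(\D_E))w'^*=\D_E$, that is, $w'\lambda_{v^*u'}(\D_E)w'^* = \D_E$ for a suitable unitary implementing the composition. So the first step is to identify $\lambda_{v^*}\circ\lambda_u$ as an endomorphism of the form $\lambda_{u''}$ for some $u''\in\U(\Bf)$, and to check that $u''\D_E^1 u''^* \neq \D_E^1$.

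For the first point, recall from the discussion around (\ref{quasifree}) that the map $\U(\Bf)\ni u\mapsto\lambda_u\in\aut(C^*(E))$ is a group homomorphism; hence $\lambda_{v^*}\circ\lambda_u = \lambda_{v^*}\circ\lambda_u$ equals $\lambda_{u''}$ for a unitary $u''\in\U(\Bf)$ obtained from the group operation on $\U(\Bf)$ — explicitly, since $\lambda_u\lambda_v = \lambda_{u\varphi\text{-twisted product}}$, one reads off $u'' = v^* \cdot \lambda_{v^*}(u)$ or the appropriate twisted product; the precise formula will not matter, only that $u''\in\U(\Bf)$. For the second point, note that $\lambda_{u''}(\D_E^1)=u''\D_E^1 u''^*$ since conjugation by $u_1=u$ on level-one elements is exactly multiplication by $u''$ (using (\ref{uaction}) with $|\mu|=|\nu|=1$, together with the fact that $u''\in\Bf$ lies in the multiplicative domain of $\varphi$). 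On the other hand $\lambda_{u''}=\lambda_{v^*}\circ\lambda_u$, so $\lambda_{u''}(\D_E^1) = \lambda_{v^*}(u\D_E^1 u^*) = \lambda_{v^*}(\lambda_u(\D_E^1))$. Thus $u''\D_E^1 u''^* = \D_E^1$ would force $\lambda_{v^*}(\lambda_u(\D_E^1))=\D_E^1$, i.e. $\lambda_u(\D_E^1) = \lambda_v(\D_E^1) = v\D_E^1 v^*$, i.e. $u\D_E^1 u^* = v\D_E^1 v^*$, contradicting the hypothesis. Hence $u''\D_E^1 u''^*\neq\D_E^1$.

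Once $u''\in\U(\Bf)$ with $u''\D_E^1 u''^*\neq\D_E^1$ is in hand, Theorem \ref{mainqfree} applies directly: it asserts that there is no unitary $w'\in C^*(E)$ with $w'\D_E w'^* = \lambda_{u''}(\D_E)$. But we constructed precisely such a $w'$ above, namely $w' = \lambda_{v^*}(w)$ satisfying $w'\lambda_{u''}(\D_E)w'^* = \D_E$, equivalently $\lambda_{u''}(\D_E) = w'^*\D_E w'$, which is of the forbidden form (with $w'^*$ in place of $w'$). This contradiction completes the proof.

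The only genuinely delicate point is the bookkeeping in the first paragraph: making sure that the composition $\lambda_{v^*}\circ\lambda_u$ really is of the form $\lambda_{u''}$ with $u''\in\U(\Bf)$ (not merely $\U_E$, which would not give invertibility or the multiplicative-domain property we need), and that the inequality $u''\D_E^1u''^*\neq\D_E^1$ is equivalent to the hypothesized inequality $u\D_E^1u^*\neq v\D_E^1v^*$ under the automorphism $\lambda_{v^*}$. Both follow from the group homomorphism property of (\ref{quasifree}) and from the identity $\lambda_{u''}(\D_E^1)=u''\D_E^1u''^*$ for $u''\in\U(\Bf)$, so no new estimates are needed — the hard analytic work has all been absorbed into Lemma \ref{keylemma} and Theorem \ref{mainqfree}.
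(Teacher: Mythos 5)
Your reduction is correct and is evidently the intended deduction: the paper states the Corollary without proof, and it does follow from Theorem \ref{mainqfree} exactly as you argue, by transporting the hypothetical unitary $w$ through $\lambda_{v^*}=\lambda_v^{-1}$. One small tightening: since $u\mapsto\lambda_u$ is a group homomorphism on $\U(\Bf)$ (because $\lambda_u(v)=uvu^*$ for $v\in\Bf$ by (\ref{uaction}), so $\lambda_u\lambda_v=\lambda_{uv}$), the composition $\lambda_{v^*}\circ\lambda_u$ is simply $\lambda_{v^*u}$, i.e.\ $u''=v^*u$ --- your guessed formula $v^*\cdot\lambda_{v^*}(u)$ is not the right one, but as you say this is immaterial --- and then $u''\D_E^1u''^*=v^*\bigl(u\D_E^1u^*\bigr)v\neq\D_E^1$ follows directly from the hypothesis without passing back through $\lambda_{v^*}$.
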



\section{Changing graphs}

The same graph $C^*$-algebra may often be presented by many different graphs, and the property of being quasi-free 
is usually not preserved when passing from one graph to another. This makes Theorem \ref{mainqfree} applicable 
to a much wider class of automorphisms than quasi-free ones, and even in the case of the 
Cuntz algebras gives a larger class of examples of 
outer conjugate MASAs which are not inner conjugate by comparison with \cite[Theorem 3.7]{CHS2015}. 
The following Example \ref{ex1} illustrates this phenomenon. 

\begin{example}\label{ex1}{\rm 
Consider the following graph:
\[ \beginpicture
\setcoordinatesystem units <1.5cm,1.5cm>
\setplotarea x from -2 to 3.5, y from -1 to 1
\put {$\bullet$} at 0 0
\put {$\bullet$} at 2 0 
\put {$E$} at -1.5 1
\put {$a$} at -0.7 0.5
\put {$b$} at -0.7 -0.5
\put {$c$} at 1.2 0.3
\put {$d$} at 3.2 0
\put {$e$} at 1.2 -0.3
\setquadratic
\plot 0 0   1 0.1  2 0 /
\plot 0 0   1 -0.1  2 0 /
\circulararc 360 degrees from 2 0 center at 2.5 0
\circulararc 360 degrees from 0 1 center at 0 0.5
\circulararc 360 degrees from 0 -1 center at 0 -0.5
\arrow <0.25cm> [0.2,0.6] from 0.9 0.1 to 1.1 0.1
\arrow <0.25cm> [0.2,0.6] from 1.1 -0.1 to 0.9 -0.1
\arrow <0.25cm> [0.2,0.6] from -0.1 1 to 0.15 1
\arrow <0.25cm> [0.2,0.6] from -0.1 -1 to 0.15 -1
\arrow <0.25cm> [0.2,0.6] from 2.4 0.5 to 2.6 0.5
\endpicture \] 
Then the graph algebra $C^*(E)$  is isomorphic to the Cuntz algebra $\OO_2=C^*(T_1,T_2)$, \cite{Cun77}, under the identification 
$$ S_a=T_{11}T_1^*, \;\;S_b=T_{121}T_1^*,\;\; S_c=T_{122}T_2^*,\;\; S_d=T_{22}T_2^*,\;\; S_e=T_{21}T_1^*. $$ 
The inverse map is given by 
$$ T_1=S_a + (S_b +S_c)(S_d + S_e)^*,\;\; T_2=S_d + S_e. $$ 
Note that this isomorphism carries $\D_E$ onto the standard diagonal MASA $\D_2$ of $\OO_2$. Let 
$$ \left[ \begin{array}{cc} \xi_{aa} & \xi_{ab} \\ \xi_{ba} & \xi_{bb} \end{array} \right] $$
be a unitary matrix with all entries non-zero complex numbers. Then 
$$ u=\xi_{aa}S_a S_a^* + \xi_{ab}S_a S_b^* + \xi_{ba}S_b S_a^* + \xi_{bb}S_b S_b^* + S_c S_c^* + S_d S_d^* + S_e S_e^* $$ 
is a unitary in $\F_E^1$ satisfying the hypothesis of Theorem \ref{mainqfree}. Consequently, if 
$$ U= \xi_{aa}T_{11}T_{11}^* + \xi_{ab}T_{11}T_{121}^* + \xi_{ba}T_{121}T_{11}^* + \xi_{bb}T_{121}T_{121}^* + 
T_{122}T_{122}^* + T_2T_2^*, $$
then $\lambda_U$ is an automorphism of $\OO_2$ (corresponding to automorphism $\lambda_u$ of $C^*(E)$ via the 
isomorphism $\OO_2\cong C^*(E)$ defined above) such that there is no unitary $w\in\OO_2$ satisfying 
$w\D_2w^*=\lambda_U(\D_2)$. 
} \hfill$\Box$ \end{example}

To get a broader picture, consider an {\em out-splitting} of a graph $E$, as defined in 
\cite{BaPa} (a related construction appears in \cite{Brenken}). Namely, for each $v\in E^0$ 
partition $s^{-1}(v)$ into the union of $m(v)$ non-empty, disjoint subsets $\E_v^1$, $\E_v^2$, \ldots, $\E_v^{m(v)}$. 
Define a new graph $F$, as follows. 
$$ \begin{aligned}
F^0 & = \{v^i \mid v\in E^0, \, 1\leq i \leq m(v)\}, \\
F^1 & = \{e^j \mid e\in E^1,\, 1\leq j\leq m(r(e))\}, 
\end{aligned} $$
with the source and the range maps defined so that for $e\in\E_{s(e)}^i$ we have $s(e^j)=s(e)^i$ and $r(e^j)=r(e)^j$. 
As shown in \cite[Theorem 3.2]{BaPa}, the $C^*$-algebras $C^*(E)$ and $C^*(F)$ are isomorphic by a map that 
carries MASA $\D_E$ onto $\D_F$.  However, 
the groups of quasi-free automorphisms in $C^*(E)$ and $C^*(F)$ may be different. For example, in the following case 
\[ \beginpicture
\setcoordinatesystem units <1.5cm,1.5cm>
\setplotarea x from -6 to 6, y from -1 to 1
\put {$\bullet$} at -4 0
\put {$\bullet$} at 0 0
\put {$\bullet$} at 2 0 
\put {$E$} at -5 1
\put {$F$} at 3 1
\put {out-splitting} at -2.3 0.3
\setlinear
\plot -3 0  -1.5 0 /
\arrow <0.25cm> [0.2,0.6] from -1.7 0 to -1.5 0
\setquadratic
\plot 0 0   1 0.1  2 0 /
\plot 0 0   1 -0.1  2 0 /
\circulararc 360 degrees from 2 0 center at 2.5 0
\circulararc 360 degrees from 0 0 center at -0.5 0
\circulararc 360 degrees from -4 1 center at -4 0.5
\circulararc 360 degrees from -4 -1 center at -4 -0.5
\arrow <0.25cm> [0.2,0.6] from 0.9 0.1 to 1.1 0.1
\arrow <0.25cm> [0.2,0.6] from 1.1 -0.1 to 0.9 -0.1
\arrow <0.25cm> [0.2,0.6] from -4.1 1 to -3.9 1
\arrow <0.25cm> [0.2,0.6] from -4.1 -1 to -3.9 -1
\arrow <0.25cm> [0.2,0.6] from 2.4 0.5 to 2.6 0.5
\arrow <0.25cm> [0.2,0.6] from -0.6 0.5 to -0.4 0.5
\endpicture \] 
the groups $\U(\Bf)$ in $C^*(F)$ and $C^*(E)$ are isomorphic to $U(1)\times U(1)\times U(1)\times U(1)$ 
and $U(2)$, respectively.


\medskip\noindent
Tomohiro Hayashi \\
Nagoya Institute of Technology \\ 
Gokiso-cho, Showa-ku, Nagoya, Aichi \\ 
466--8555, Japan \\
E-mail: hayashi.tomohiro@nitech.ac.jp \\

\smallskip\noindent
Jeong Hee Hong \\
Department of Data Information \\ 
Korea Maritime and Ocean University \\ 
Busan 49112, South Korea \\
E-mail: hongjh@kmou.ac.kr \\

\smallskip\noindent
Wojciech Szyma{\'n}ski\\
Department of Mathematics and Computer Science \\
The University of Southern Denmark \\
Campusvej 55, DK--5230 Odense M, Denmark \\
E-mail: szymanski@imada.sdu.dk

\end{document}